\documentclass[11pt, a4paper, reqno]{amsart}
\usepackage[top=35truemm,bottom=35truemm,left=35truemm,right=35truemm]{geometry}
\usepackage{setspace}
\usepackage{amsmath, amsfonts,amsthm,amssymb,mathrsfs,amscd}
\usepackage{bm}
\usepackage{enumerate}
\usepackage{graphicx}
\usepackage{ascmac}
\usepackage[arrow,matrix]{xy}
\usepackage{color}
\usepackage{array}
\usepackage{pifont}
\usepackage{longtable}
\usepackage{here}
\usepackage{braket}
\usepackage{mathtools}
\usepackage{tikz}

\newtheorem{theo}{Theorem}[section]
\newtheorem*{theo*}{Theorem}

\newtheorem{prop}{Proposition}[section]

\newtheorem{lem}{Lemma}[section]

\newtheorem{introthm}{Theorem}

\providecommand{\abs}[1]{\left\lvert#1\right\rvert}

\providecommand{\presentation}[2]{\left\langle#1 \middle| #2\right\rangle}

\DeclareMathOperator{\Vol}{Vol}

\DeclareMathOperator{\Mag}{Mag}

\makeatletter
\@addtoreset{equation}{section}

\makeatother

\makeatletter
\newcommand{\vb}{\@ifstar{\vb@nostar}{\vb@star}}

\newcommand{\vb@nostar}[1]{\bm{#1}}

\newcommand{\vb@star}[1]{%
  \accentset{\smash{\rightharpoonup}}{\bm{#1}}%
}
\makeatother

\def\disp{\displaystyle}

\def\Z{\mathbb{Z}}

\def\N{\mathbb{N}}

\def\R{\mathbb{R}}

\def\({\left(}
\def\){\right)}
\def\<{\langle}
\def\>{\rangle}

\begin{document}

\title[ ]{Magnitude of homogeneous Moran sets in the unit interval}
\author{Ryo Matsuda}
\address{Department of Mathematical Sciences, College of Science and Engineering, Ritsumeikan University, 1-1-1 Nojihigashi, Kusatsu, Shiga 525-8577, Japan}
\email{r-mat@fc.ritsumei.ac.jp}
\author{Tomoshige Yukita}
\address{Liberal Arts Education Center, Ashikaga University, 268-1 Omaecho, Ashikaga,Tochigi 326-8558, Japan}
\email{yukita.tomoshige@g.ashikaga.ac.jp}
\subjclass[2020]{Primary~51F99, Secondary~28A80}
\date{}
\thanks{}

\begin{abstract}
Magnitude, denoted by $\operatorname{Mag}(X)$, is a real-valued invariant of compact metric spaces whose large-scale growth reflects their geometry. Willerton showed that, for a compact homogeneous Riemannian manifold $X$, $\operatorname{Mag}(tX)$ grows like $t^{\dim X}$, with the volume of $X$ appearing in its leading asymptotic terms.
We study a homogeneous Moran Cantor set $E$ equipped with the Euclidean metric $d$ and with its coding ultrametric $d_u$,
writing $E_u=(E,d_u)$.
We prove that the upper and lower growth exponents of $\operatorname{Mag}(tE_u)$,
called the magnitude dimensions of $E_u$,
coincide respectively with the upper and lower Euclidean box dimensions of $E$.
In the self-similar case with constant contraction ratio $r$,
we obtain $\operatorname{Mag}(tE_u)=t^s/\widetilde{p}(\log t)+o(t^s)$ as $t\to\infty$,
where $s$ is the Hausdorff dimension of $E$ and $\widetilde{p}$ is a positive smooth function of period $-\log r$.
The harmonic mean of the leading coefficient $1/\widetilde{p}$ is $m\log m/((m-1)\Gamma(s+1))$,
giving a fractal analogue of Willerton's leading-order asymptotics with a log-periodic,
rather than constant, coefficient.
\end{abstract}

\maketitle
\section{Introduction}
Magnitude is a real-valued invariant introduced by Leinster for finite metric spaces as a metric analogue of the Euler characteristic~\cite{Leinster2013}, and later extended to broad classes of compact metric spaces, in particular by Meckes~\cite{Meckes2013}.
For a finite metric space,
magnitude is defined from the similarity matrix $\zeta_X=(e^{-d(x, y)})_{x.y\in X}$ and is considered as an effective number of distinguishable points at the scale determined by the metric.
Further details are given in Section~\ref{sec:magnitude definition}.

\smallskip
For a metric space $X$ and $t>0$, let $tX$ denote the same set with all distances multiplied by $t$.
The function $\Mag(tX)$ in $t\in \R_{>0}$ is called the magnitude function of $X$.
As $t$ increases, the space is viewed at finer and finer scales, so the growth of $\Mag(tX)$ may reflect the small-scale geometry of $X$.

\smallskip
The upper and lower growth exponents of $\Mag(tX)$ are defined by
\[
    \overline{\dim}_{\Mag}(X)
    :=
    \limsup_{t\to\infty}
    \frac{\log\Mag(tX)}{\log t},
    \quad
    \underline{\dim}_{\Mag}(X)
    :=
    \liminf_{t\to\infty}
    \frac{\log\Mag(tX)}{\log t}.
\]
These quantities are called the upper and lower magnitude dimensions of $X$, respectively.
Meckes established a fundamental connection between magnitude and box dimensions of compact subspaces of Euclidean space or compact ultrametric spaces~\cite[Corollaries 7.3 and 7.4]{Meckes2015} (see Theorem~\ref{theo:Meckes dimensions}).
In both results, however, the magnitude dimension and the box dimension are computed using the same metric.

\smallskip
Our first aim is to exhibit a connection between magnitude dimensions and box dimensions with respect to different metrics on the same underlying compact set.
Our example is a Cantor set arising from a homogeneous Moran construction, equipped with both its Euclidean metric and a coding ultrametric.
It suggests that, for suitable fractals, the magnitude dimensions of a coding ultrametric may recover the box dimensions of the same set in its original metric.

\smallskip
Let $E=E(m,\mathbf{r})\subset[0,1]$ be a homogeneous Moran set with fixed branching number $m\geq2$ and contraction ratios $\mathbf{r}=(r_k)_{k\geq1} \ (r_k<m^{-1})$ as defined in Section~\ref{sec:Moran set and ultrametric}.
The coding of the Moran construction identifies $E$ with the symbolic space $\{1,\dots,m\}^{\mathbb{N}}$ and induces a natural ultrametric $d_u$.
We write $E_u$ for the compact ultrametric space $(E, d_u)$.
We continue to write $E$ for the homogeneous Moran set equipped with the Euclidean metric.
By Brouwer's characterization of the Cantor space~\cite[Theorem 7.4, p.35]{Kechris1995},
both the compact metric spaces $E$ and $E_u$ are Cantor spaces.
Our first main result shows that the magnitude dimensions of $E_u$ coincide with the Euclidean box dimensions of $E$.

\begin{introthm}\label{introthm:dimensions}
Let $E=E(m,\mathbf{r})$ be the homogeneous Moran set constructed in Section~\ref{sec:Moran set and ultrametric},
and let $E_u=(E,d_u)$ be the compact ultrametric space induced by its coding. Then
\[
    \overline{\dim}_{\Mag}(E_u)
    =\overline{\dim}_B(E),
    \qquad
    \underline{\dim}_{\Mag}(E_u)
    =\underline{\dim}_B(E),
\]
where the box dimensions on the right-hand side are computed with respect to the Euclidean metric on $E$.
\end{introthm}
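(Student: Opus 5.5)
The plan is to compute $\Mag(tE_u)$ exactly through the self-similar tree structure of the coding ultrametric, and then compare its growth with the counting formula for the Euclidean box dimensions of $E$. Write $\delta_0=1$ and $\delta_k=r_1\cdots r_k$. I take $d_u(x,y)=\delta_{n}$, where $n$ is the length of the longest common prefix of the codings of $x$ and $y$; if the normalisation in Section~\ref{sec:Moran set and ultrametric} differs, only harmless constants change. So a level-$k$ cylinder $C$ has $d_u$-diameter $\delta_k$. It splits into $m$ level-$(k+1)$ cylinders, each an isometric copy of one another, and points in different children are at distance exactly $\delta_k$.

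\emph{Step 1: exact formula.} Let $M_k(t)$ be the magnitude of a level-$k$ cylinder scaled by $t$. Use symmetry: by uniqueness of weightings (or weight measures, in Meckes' compact setting), the weighting on $C$ is the sum of the children's weightings, each multiplied by a common constant $c$. Testing the weight equation at a point of one child gives
\[
c\bigl(1+(m-1)e^{-t\delta_k}M_{k+1}(t)\bigr)=1 .
\]
Hence
\[
\frac{1}{M_k(t)}=\frac{1}{mM_{k+1}(t)}+\frac{m-1}{m}e^{-t\delta_k}.
\]
Iterating from $k=0$ to $n$ and using $M_n(t)\to1$ as $n\to\infty$ (the diameters $\delta_n\to0$, and magnitude is continuous for compact ultrametric spaces) yields
\[
\Mag(tE_u)^{-1}=\Bigl(1-\frac1m\Bigr)\sum_{k\ge0}m^{-k}e^{-t\delta_k}.
\]
To justify the passage to the limit rigorously, I would approximate $E_u$ by its finite sets of cylinder representatives and use lower semicontinuity/continuity of magnitude for ultrametric spaces (Meckes). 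Alternatively, I would verify directly that the measure built level by level is a weight measure.

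\emph{Step 2: two-sided estimate.} Fix $t>1$ and let $k=k(t)$ satisfy $\delta_k\le 1/t<\delta_{k-1}$.
\begin{itemize}
\item The term with index $k$ alone gives $\Mag(tE_u)^{-1}\gtrsim m^{-k}$, up to a factor $e^{-1}$.
\item The tail $j\ge k$ is at most $m^{-k}\cdot m/(m-1)$.
\item For $j<k$, the hypothesis $r_i<1/m$ gives $\delta_j/\delta_{k-1}\ge m^{k-1-j}$, so $t\delta_j\ge m^{k-1-j}$. Therefore
\[
\sum_{j<k}m^{-j}e^{-t\delta_j}\le m^{-(k-1)}\sum_{i\ge0}m^{i}e^{-m^i},
\]
and the last sum is a finite constant.
\end{itemize}
Together these give $\Mag(tE_u)\asymp m^{k(t)}$, with constants depending only on $m$.

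\emph{Step 3: dimensions.} Since $\log t$ ranges over $[-\log\delta_{k-1},-\log\delta_k]$ while $\Mag(tE_u)\asymp m^k$:
\begin{itemize}
\item The $\limsup$ of $\log\Mag(tE_u)/\log t$ is attained along $t\downarrow 1/\delta_{k-1}$. There the ratio is $(k\log m)/(-\log\delta_{k-1})$, which has the same $\limsup$ as $(k\log m)/(-\log\delta_k)$ after an index shift, because $-\log\delta_k\to\infty$.
\item The $\liminf$ is attained at $t=1/\delta_k$, giving the value $\liminf_k k\log m/(-\log\delta_k)$.
\end{itemize}
It remains to invoke the standard result for homogeneous Moran sets (Feng–Wen–Wu type, presumably recalled in Section~\ref{sec:Moran set and ultrametric}). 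That result states $\overline{\dim}_B(E)=\limsup_k \frac{k\log m}{-\log\delta_k}$ and $\underline{\dim}_B(E)=\liminf_k \frac{k\log m}{-\log\delta_k}$. Its proof covers $E$ by the $m^k$ basic intervals of length $\delta_k$, and for the lower bound uses separation of basic intervals, guaranteed by the gap structure of the construction.

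I expect the main obstacle to be this last Euclidean identification if it is not already stated earlier. There the lower bound on covering numbers needs the gaps between level-$k$ basic intervals to be comparable to $\delta_k$, or else an argument through the $\liminf$/$\limsup$ over the sequence $\delta_k$ that tolerates arbitrary gap placement. A secondary technical point is justifying the infinite recursion in Step 1 for the compact (non-finite) space.
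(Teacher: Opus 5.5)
Your Steps 1 and 2 are essentially the paper's argument. The paper also obtains $\Mag(tE_u)^{-1}=(1-\frac1m)\sum_{k\ge0}m^{-k}e^{-tR_k}$ from the homogeneous finite representative sets $E_{u,n}$ together with Meckes' continuity theorem. For the remainder $1/(m^nM_n(t))$ you only need $M_n(t)\ge1$, not $M_n(t)\to1$. The paper then proves $\Mag(tE_u)\asymp m^{k(t)}$ by exactly your three-way split, using $r_j<1/m$ to get $tR_j\ge m^{k(t)-1-j}$.

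The gap is in Step 3, which is the only place where the Euclidean metric enters. You outsource the lower bound $N_{1/t}(E)\gtrsim m^{k(t)}$ to a ``standard result'' that the paper does not recall. The route you indicate for it, separation of the step-$k$ intervals by gaps comparable to $\delta_k$, is not available in general, and you do not supply the alternative. The step-$k$ gap is $\delta_{k-1}(1-mr_k)/(m-1)$, so its ratio to $\delta_k$ is $(1-mr_k)/((m-1)r_k)$. This tends to $0$ whenever $r_k\to1/m$ along a subsequence, which by Proposition~\ref{prop:difference of metrics} is exactly the non-bi-Lipschitz case. If the gaps were uniformly comparable to $\delta_k$, the theorem would already follow from Theorem~\ref{theo:Meckes dimensions} and bi-Lipschitz invariance.

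The missing idea is to count one level up, where only disjointness is used. Each closed $1/t$-ball of $E$ lies in an interval of length $2/t<2R_{k(t)-1}$. Such an interval meets at most three of the pairwise disjoint step-$(k(t)-1)$ intervals, since meeting four would force it to contain the middle two, of total length $2R_{k(t)-1}$. Each of these $m^{k(t)-1}$ intervals contains points of $E$, so $N_{1/t}(E)\ge m^{k(t)-1}/3$.

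The paper runs an equivalent count with endpoints. A covering interval cannot contain $2m^2+1$ consecutive endpoints of step-$k(t)$ intervals, because $m^2$ consecutive such intervals plus the $m^2-m$ step-$k(t)$ gaps among them already have length $mR_{k(t)-1}>2/t$. Hence $N_{1/t}(E)\ge m^{k(t)-2}$.

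Together with the trivial bound $N_{1/t}(E)\le m^{k(t)}$, this gives $A\,\Mag(tE_u)\le N_{1/t}(E)\le B\,\Mag(tE_u)$, and both equalities follow immediately. Your explicit $\limsup$/$\liminf$ formulas in terms of $k\log m/(-\log\delta_k)$ are correct but unnecessary.
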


This differs from Meckes' same-metric comparison.
Here,
the magnitude dimension is computed using $d_u$,
while the box dimension is computed using the Euclidean metric.
The two metrics need not be bi-Lipschitz equivalent.
In fact,
we prove as Proposition~\ref{prop:difference of metrics} that they are bi-Lipschitz equivalent if and only if
\[
    \sup_{k\geq1}r_k<\frac{1}{m}.
\]
Theorem~\ref{introthm:dimensions} remains valid without this stronger assumption.
Hence the result is not simply a consequence of bi-Lipschitz invariance and Meckes' theorem.

\smallskip
Our second main result concerns the determination of the large-scale behavior of $\Mag(t E_u)$ for the case that all contraction ratios are equal to a constant $r$.
Then $s=-\log_r m$ is the Hausdorff and box dimension of $E$~\cite[Theorem II]{Moran1946}.
Earlier magnitude computations for particular fractals,
including the ternary Cantor set with its Euclidean metric,
were obtained by Leinster and Willerton~\cite[Section 3]{LeinsterWillerton2013}.
Here we obtain a precise asymptotic formula for $\Mag(t E_u)$.

\begin{introthm}\label{introthm:asymptotics}
Suppose that $r_k=r$ for every $k\geq1$.
Set $s=-\log_r m, \ T=-\log r$.
Then there exists a positive smooth $T$-periodic function $\widetilde{p}:\R\to \R_{>0}$ such that
\[
    \Mag(tE_u)
    =\frac{t^s}{\widetilde{p}(\log t)}+o(t^s)
    \qquad
    (t\to\infty).
\]
Moreover,
\[
    \frac{1}{T}\int_0^T\widetilde{p}(x)\,dx
    =\frac{(m-1)\Gamma(s+1)}{m\log m}.
\]
\end{introthm}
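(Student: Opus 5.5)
The plan is to compute $\Mag(tE_u)$ essentially in closed form, using homogeneity of $E_u$, and then read off the asymptotics by a Poisson-type periodisation. The first step relies on two facts. Compact ultrametric spaces are positive definite. For a homogeneous positive definite compact space, the isometry-invariant probability measure $\mu$ is, after rescaling, a weighting. Consequently
\[
\Mag(tE_u)=\Bigl(\int_{E_u} e^{-t d_u(x,y)}\,d\mu(y)\Bigr)^{-1},
\]
and the integral does not depend on $x$. Here $\mu$ is the uniform Bernoulli measure on $\{1,\dots,m\}^{\mathbb N}$. Since $\mu$ is invariant under the tree automorphisms, which act transitively by isometries of $d_u$, this reduction should follow from Meckes' results quoted earlier.

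The second step evaluates this integral. When $r_k=r$, the ultrametric is (up to the normalisation fixed in Section~\ref{sec:Moran set and ultrametric}) $d_u(x,y)=c\,r^{n}$, where $n$ is the length of the common prefix of the codings of $x$ and $y$. The set of $y$ whose common prefix with $x$ has length exactly $n$ has $\mu$-measure $(m-1)m^{-n-1}$. Hence
\[
F(t):=\int e^{-td_u(x,y)}\,d\mu(y)=\frac{m-1}{m}\sum_{n\ge0}m^{-n}e^{-ct r^n}.
\]
A constant $c\neq1$ only shifts $\log t$ by $\log c$, so I take $c=1$ below.

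The third step periodises. Write $t=e^x$, and use $r^n=e^{-nT}$ and $m^{-n}=e^{-nsT}$, which holds because $sT=\log m$. Then
\[
t^sF(t)=\frac{m-1}{m}\sum_{n\ge0}g(x-nT),\qquad g(y)=e^{sy}\exp(-e^{y}).
\]
Define
\[
\widetilde p(x)=\frac{m-1}{m}\sum_{n\in\mathbb Z}g(x-nT).
\]
The function $g$ decays like $e^{sy}$ as $y\to-\infty$ and superexponentially as $y\to+\infty$, and the same holds for all its derivatives. So the series converges locally uniformly with all derivatives, and $\widetilde p$ is smooth, positive and $T$-periodic. The omitted terms $n<0$ sum to at most $\sum_{k\ge1}g(x+kT)$, which tends to $0$ as $x\to\infty$. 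Hence $t^sF(t)=\widetilde p(\log t)+o(1)$. Since $\widetilde p$ is bounded below by a positive constant, we get
\[
\Mag(tE_u)=\frac{t^s}{\widetilde p(\log t)+o(1)}=\frac{t^s}{\widetilde p(\log t)}+o(t^s).
\]

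The fourth step computes the mean by unfolding the periodic sum:
\[
\frac1T\int_0^T\widetilde p=\frac{m-1}{mT}\int_{\mathbb R}e^{sy}e^{-e^y}\,dy=\frac{m-1}{mT}\Gamma(s),
\]
using the substitution $u=e^y$. With $T=\log m/s$ this equals $\frac{(m-1)\Gamma(s+1)}{m\log m}$, as claimed.

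The main obstacle is the first step: justifying rigorously that the uniform measure realises the magnitude of the infinite compact space $tE_u$ for every $t>0$. The computation of $F$ and the asymptotics afterwards are routine. For the first step I would first try the explicit route. Each finite truncation $tE_u^{(N)}$, obtained by collapsing level-$N$ cylinders, is a homogeneous finite ultrametric space, so its uniform weighting can be checked directly. Then I would use Meckes' result that magnitude of a positive definite compact space is the limit of the magnitudes of finite approximations converging in the Gromov--Hausdorff sense, and pass to the limit in the formula for $F$.
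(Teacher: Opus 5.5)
Your proposal is correct and follows the paper's proof essentially step for step. The paper also obtains $\Mag(tE_u)=\bigl(\frac{m-1}{m}\sum_{k\ge0}m^{-k}e^{-tr^k}\bigr)^{-1}$ from uniformly weighted finite homogeneous approximations plus Meckes' continuity theorem, then writes this as the bilateral $T$-periodic sum $\widetilde p(\log t)$ minus a negligible tail, and unfolds the period integral to $\Gamma(s)$.

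One correction on the finite-approximation step: Theorem~\ref{theo:magnitude approximation} concerns compact \emph{subspaces} converging in Hausdorff distance, not general Gromov--Hausdorff limits, for which Meckes only gives lower semicontinuity. You should therefore realize your collapsed level-$N$ space inside $E_u$ by picking one point per cylinder, which is exactly the paper's $E_{u,n}$. Also, $d_u=r^n$ exactly because $R_0=1$, so no constant $c$ arises. Your remark about $c$ is in any case inaccurate: a $c\neq1$ would rescale $\widetilde p$ by $c^{-s}$ and change the mean, not merely shift $\log t$.
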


Willerton proved that geometric quantities such as volume and total scalar curvature appear in the asymptotic expansion of $\Mag(tX)$ of a homogeneous Riemannian manifold $X$~\cite{Willerton2014}.
More precisely, if $\dim X=n$, then
\[
    \Mag(tX)
    =
    \frac{\Gamma\left(\frac{n}{2}+1\right)\Vol(X)}
         {\Gamma(n+1)\pi^{n/2}}t^n
    +O(t^{n-2})
    \quad
    (t\to\infty).
\]
Here we display only the leading term in~\cite[Theorem 11]{Willerton2014},
since our comparison concerns the highest-order growth.
Thus,
the leading growth is of order $t^n$,
and its coefficient contains geometric information about $X$, namely its volume.

\smallskip
Theorem~\ref{introthm:asymptotics} may be viewed as a fractal counterpart of this phenomenon.
The Hausdorff dimension $s$, which may be nonintegral, replaces the manifold dimension $\dim X$.
In contrast to the Riemannian setting,
the coefficient of $t^s$ is not constant but a log-periodic function $1/\widetilde{p}(x)$.
We consider the reciprocal of the average of $\widetilde{p}(x)$ over one period instead of the coefficient of the leading term in Willerton's formula.
Theorem~\ref{introthm:asymptotics} tells us that the reciprocal of the averaged value explicitly reflects the branching number $m$ and the Hausdorff dimension $s$ through the Gamma function.
In this sense, Theorem~\ref{introthm:asymptotics} gives a Willerton-type leading asymptotic formula for homogeneous Moran ultrametric spaces.

\smallskip
The paper is organized as follows. In Section~2, we review magnitude and magnitude dimensions, construct the homogeneous Moran set $E$, and define the coding ultrametric $d_u$. In Section~3, we approximate $E_u$ by finite homogeneous ultrametric spaces and derive an explicit formula for its magnitude function. In Section~4, we prove Theorem~\ref{introthm:dimensions}, characterize when the Euclidean and coding metrics are bi-Lipschitz equivalent, and prove Theorem~\ref{introthm:asymptotics}.

\section{Preliminaries}
In this section,
we review magnitudes of compact metric spaces,
and construct a Moran set which is homeomorphic to a Cantor set.

\subsection{Magnitude of compact metric spaces}\label{sec:magnitude definition}
Magnitude was first defined for finite metric spaces by Leinster~\cite{Leinster2013}.
Let $X=(X, d)$ be a finite metric space and set $\zeta(x, y)=\exp{(-d(x, y))}$ for $x, y\in X$.
The symmetric matrix $\zeta_X=(\zeta(x, y))_{x,y\in X}\in \R^{X\times X}$ is called the \emph{similarity matrix} of $X$.
A function $\vb*{w}:X\to \R$ is called a \emph{weighting} on $X$ if $\displaystyle \sum_{y\in X}\zeta(x, y)\vb*{w}(y)=1$ for all $x\in X$.
If $\vb*{w}_1$ and $\vb*{w}_2$ are two weightings on $X$,
then we see that 
\begin{align*}
\sum_{x\in X}\vb*{w}_1(x)
&=\sum_{x\in X}\(\sum_{y\in X}\zeta(x, y)\vb*{w}_2(y)\)\vb*{w}_1(x)\\
&=\sum_{y\in X}\(\sum_{x\in X}\zeta(y, x)\vb*{w}_1(x)\)\vb*{w}_2(y)=\sum_{y\in X}\vb*{w}_2(y)\,.
\end{align*}
If $X$ admits a weighting $\vb*{w}$,
then the \emph{magnitude} $\Mag(X)$ of $X$ is defined by
\[
\Mag(X)=\sum_{x\in X}\vb*{w}(x)\,.
\]
Note that not every finite metric space has magnitude.
There are several conditions that guarantee the existence of a weighting.

\smallskip
A metric space is \emph{homogeneous} if its isometry group acts transitively.
In this case,
a weighting can be obtained as follows.
By the transitivity of its isometry group,
all rows of the similarity matrix have the same sum,
say $M$.
Then the constant function $\vb*{w}(x)=M^{-1}$ for $x\in X$ is a weighting on $X$.

\smallskip
Another condition guaranteeing the existence of a weighting is positive definiteness.
A finite metric space $X$ is said to be \emph{positive definite} if the similarity matrix $\zeta_X$ is positive definite.
If so,
since the similarity matrix is invertible,
every positive definite finite metric space admits a unique weighting and hence has magnitude.
Every finite subset of Euclidean space is positive definite.
A metric space $X$ is \emph{ultrametric} if $d(x, z)\leq \max{\{d(x, y), d(y, z)\}}$ for $x,y,z\in X$.
\begin{prop}[{\cite[Proposition 2.4.18]{Leinster2013}}]\label{prop:ultrametic spaces are positive definite}
Every finite ultrametric space is positive definite.
\end{prop}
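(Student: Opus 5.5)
Since every metric space with at most one point is trivially positive definite, I will argue by strong induction on $n=\abs{X}$, using the hierarchical structure of ultrametric balls. The key structural fact I will use is the following. Let $X$ be a finite ultrametric space with $n\geq 2$ points and let $D=\max_{x,y}d(x,y)>0$. The relation $x\sim y \iff d(x,y)<D$ is an equivalence relation; transitivity is exactly the strong triangle inequality. It has at least two classes $X_1,\dots,X_k$ ($k\geq2$), and $d(x,y)=D$ whenever $x,y$ lie in different classes. So, after ordering the points class by class,
\[
\zeta_X = \mathrm{diag}\(\zeta_{X_1},\dots,\zeta_{X_k}\) + e^{-D}\(J - \mathrm{diag}(J_1,\dots,J_k)\),
\]
where $J$ is the all-ones matrix and $J_i$ is the all-ones block on $X_i$. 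Each $X_i$ is an ultrametric space with fewer points, so by induction each $\zeta_{X_i}$ is positive definite.

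Next I regroup the decomposition as
\[
\zeta_X=\mathrm{diag}\(\zeta_{X_1}-e^{-D}J_1,\dots,\zeta_{X_k}-e^{-D}J_k\)+e^{-D}J.
\]
The term $e^{-D}J$ is positive semidefinite. It then suffices to show that each $\zeta_{X_i}-e^{-D}J_i$ is positive definite. Inside $X_i$ every distance is at most some $D_i<D$. The matrix $\zeta_{X_i}-e^{-D}J_i$ is the similarity-type matrix of $X_i$ with entries $e^{-d(x,y)}-e^{-D}$.

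To handle this shifted matrix, I strengthen the induction hypothesis to the following statement.
\begin{quote}
For every finite ultrametric space $Y$ and every constant $c$ with $0\leq c< e^{-\operatorname{diam}Y}$, the matrix $(e^{-d(x,y)}-c)_{x,y\in Y}$ is positive definite. (For $\abs{Y}=1$ read $\operatorname{diam}Y=0$, so the condition is $c<1$.)
\end{quote}
Taking $c=0$ recovers the proposition. The base case $\abs{Y}=1$ is the $1\times1$ matrix $1-c>0$.

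For the inductive step, write $Y$ as above with top distance $D$ and classes $Y_i$. Then
\[
(e^{-d}-c)_Y=\mathrm{diag}\((e^{-d}-e^{-D})_{Y_1},\dots\)+(e^{-D}-c)J.
\]
Because $e^{-D}-c>0$, the last term is positive semidefinite. Each block is positive definite by the strengthened induction hypothesis with $c'=e^{-D}$, which is admissible because $\operatorname{diam}Y_i<D$ gives $c'<e^{-\operatorname{diam}Y_i}$. A positive definite block-diagonal matrix plus a positive semidefinite matrix is positive definite, and this completes the induction.

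The main obstacle is noticing that plain positive definiteness of the sub-blocks is not strong enough to carry the induction. One has to track the subtraction of the constant $e^{-D}$, and that is what forces the strengthened, shifted statement. Once it is formulated, each step is elementary.
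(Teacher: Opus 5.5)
Your proof is correct and complete. The paper gives no argument of its own here: it quotes the result from Leinster and uses it as a black box. So there is no in-paper proof to compare against, and what you have written is a self-contained replacement for the citation.

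The key points all check out.
\begin{itemize}
\item The classes of the relation $d(x,y)<D$, with $D=\operatorname{diam}Y$, are well defined by the strong triangle inequality.
\item There are at least two classes, and all cross-class distances equal $D$.
\item The identity $(e^{-d}-c)_Y=\mathrm{diag}\bigl((e^{-d}-e^{-D})_{Y_i}\bigr)+(e^{-D}-c)J$ holds entrywise.
\item The shifted constant $c'=e^{-D}$ is admissible for each $Y_i$ because $\operatorname{diam}Y_i<D$.
\end{itemize}
You were right that plain positive definiteness of the blocks would not close the induction; the shifted, strengthened hypothesis is exactly what is needed.

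Other familiar routes go through the theory of strictly ultrametric matrices, or through negative type and Schoenberg's theorem. Your argument needs nothing beyond ``positive definite plus positive semidefinite is positive definite.''

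Since $tY$ is again ultrametric, your argument also gives at once the stable positive definiteness that the paper actually uses for $tE_u$. The ball decomposition driving your induction is the same prefix decomposition behind the row-sum computation in Lemma~\ref{lem:magnitude of n step moran set}.
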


Meckes extended magnitude from finite metric spaces to compact positive definite metric spaces using a measure-theoretic variational formula~\cite{Meckes2013}.
Let $X=(X,d)$ be a compact metric space, and let $M(X)$ be the Banach space of finite signed Borel measures on $X$ equipped with the total variation norm.
We define a bilinear form $Z_X$ on $M(X)$ by 
\[
Z_X(\mu, \nu)=\int_X\int_X  \exp{(-d(x, y))}\,d\mu(x)\,d\nu(y)\,.
\]
The \emph{magnitude} $\Mag(X)$ of $X$ is defined by
\[
\Mag(X)=\sup{\Set{\dfrac{\mu(X)^2}{Z_X(\mu, \mu)}|\mu\in M(X)\setminus{\{0\}}, \ Z_X(\mu, \mu)\neq 0}}\,.
\]

As in the finite case, homogeneity provides a natural way to construct a weighting measure.
Willerton computed the magnitude of a compact homogeneous Riemannian manifold $X$ using its Riemannian volume measure~\cite{Willerton2014}.
The magnitude is
\[
\Mag(X)=\dfrac{\Vol(X)}{\displaystyle \int_X e^{-d(x, x_0)}\,d\mu(x)},
\]
where $x_0\in X$ is arbitrary,
$\mu$ is the Riemannian volume measure on $X$,
and $\Vol(X)$ is the volume of $X$.

\smallskip
A metric space $X$ is \emph{positive definite} if every finite subset of $X$, equipped with the restricted metric, is positive definite.
Every compact subset of Euclidean space is positive definite.
By Proposition~\ref{prop:ultrametic spaces are positive definite},
every compact ultrametric space is positive definite.
For finite positive definite metric spaces,
the above definition of magnitude coincides with the original definition for finite metric spaces~\cite[Proposition 2.4.3]{Leinster2013}.
We write $d_H$ for the Hausdorff distance between compact subsets of $X$.
\begin{theo}[{\cite[Corollary 2.7]{Meckes2013}}]\label{theo:magnitude approximation}
If $X$ is a compact positive definite metric space and $\{X_n\}$ is a sequence of compact subspaces of $X$ such that $d_H(X_n,X)\to0$ as $n\to\infty$,
then $\Mag(X_n)\to\Mag(X)$ as $n\to\infty$.
\end{theo}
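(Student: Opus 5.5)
The plan is to prove the two inequalities $\limsup_n \Mag(X_n)\le \Mag(X)$ and $\liminf_n\Mag(X_n)\ge\Mag(X)$ separately. Both rest on the variational formula for magnitude and on the elementary estimate $\abs{e^{-a}-e^{-b}}\le\abs{a-b}$ for $a,b\ge0$. That estimate makes the kernel $\exp(-d(x,y))$ $1$-Lipschitz in each variable.

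\emph{Upper bound (monotonicity).} Each $X_n$ is a compact subspace of a positive definite space, so it is itself positive definite, and $\Mag(X_n)$ is given by the supremum formula. Any $\mu\in M(X_n)$ pushes forward under the inclusion to a measure in $M(X)$ with the same total mass $\mu(X_n)$. The same holds for the quadratic form: $Z_{X_n}(\mu,\mu)=Z_X(\mu,\mu)$, because the metric on $X_n$ is the restricted one. Hence every ratio competing in the supremum for $X_n$ also competes for $X$, and $\Mag(X_n)\le\Mag(X)$ for every $n$.

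\emph{Lower bound (discretize and move atoms).} Fix a value $c<\Mag(X)$ and choose $\mu\in M(X)$ with $Z_X(\mu,\mu)\ne0$ and $\mu(X)^2/Z_X(\mu,\mu)>c$.
\begin{enumerate}[(i)]
\item Let $\delta_n:=d_H(X_n,X)\to0$. Cover the compact space $X$ by finitely many Borel sets $A_1,\dots,A_k$ of diameter $<\delta$, made disjoint.
\item For each $j$ choose $x_j\in A_j$, and then $y_j\in X_n$ with $d(x_j,y_j)\le\delta_n$. Set $\nu_n:=\sum_j\mu(A_j)\,\delta_{y_j}\in M(X_n)$.
\item Then $\nu_n(X_n)=\mu(X)$. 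Every point of $A_j$ lies within $\delta+\delta_n$ of $y_j$, so the Lipschitz property of the kernel gives
\[
\abs{Z_X(\mu,\mu)-Z_{X_n}(\nu_n,\nu_n)}\le 2(\delta+\delta_n)\,\|\mu\|^2 .
\]
To see this, integrate the kernel difference against $\abs{\mu}\otimes\abs{\mu}$ after replacing each variable in turn.
\end{enumerate}
Note that $Z_X(\mu,\mu)>0$. It is a limit of such quadratic forms of finitely supported measures, which are $\ge0$ by positive definiteness, and it is assumed nonzero. So for $\delta$ small and $n$ large, $Z_{X_n}(\nu_n,\nu_n)$ is positive and close to $Z_X(\mu,\mu)$. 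Hence $\mu(X)^2/Z_{X_n}(\nu_n,\nu_n)>c$, and therefore $\Mag(X_n)>c$ for all large $n$. Letting $c\uparrow\Mag(X)$ gives the liminf inequality. This holds also if $\Mag(X)=\infty$, in which case the conclusion is $\Mag(X_n)\to\infty$.

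The only delicate point is the last one: ensuring that the denominators stay nonzero and of the correct sign. This is exactly where positive definiteness enters. For a finitely supported nonzero $\nu_n$ it gives $Z(\nu_n,\nu_n)>0$ directly. For the general $\mu$, positivity follows by the approximation in step (iii). Everything else is uniform continuity of the kernel on the compact space $X\times X$.
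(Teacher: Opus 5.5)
Your proof is correct. The paper does not prove this statement; it imports it from \cite{Meckes2013}, where it follows from the same two facts you establish: monotonicity $\Mag(X_n)\le\Mag(X)$ under inclusion, and lower semicontinuity of magnitude, which is stated there for Gromov--Hausdorff convergence. Your lower bound is a self-contained version adapted to the measure-theoretic definition used in this paper. Your $\nu_n$ is the push-forward of $\mu$ under the piecewise constant map sending $A_j$ to $y_j$. That map moves every point by less than $\delta+\delta_n$, so the $1$-Lipschitz kernel gives $\abs{Z_X(\mu,\mu)-Z_{X_n}(\nu_n,\nu_n)}\le 2(\delta+\delta_n)\|\mu\|^2$. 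This avoids any appeal to the characterization of magnitude through finite subspaces.

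One correction to your commentary: the step you call delicate does not need positive definiteness. A Dirac mass shows $\Mag(X)\ge1$, so you may take $c>0$. Then $\mu(X)^2/Z_X(\mu,\mu)>c$ already forces $Z_X(\mu,\mu)>0$ and $\mu(X)\ne0$. Positivity of $Z_{X_n}(\nu_n,\nu_n)$ for large $n$ then follows from your estimate alone. So your argument proves convergence of the supremum for any compact metric space. Positive definiteness is what makes that supremum agree with the weighting-based magnitude of finite spaces \cite[Proposition 2.4.3]{Leinster2013}. That identification is what the paper relies on when it combines this theorem with Lemma~\ref{lem:magnitude of n step moran set} in the proof of Proposition~\ref{prop:magnitude function of Eu}.
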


For a compact metric space $X=(X, d)$ and $t>0$,
we denote the $t$-scaled metric space $(X, td)$ by $tX$.
We regard $\Mag(tX)$ as a partially defined function of $t\in \R_{>0}$, defined whenever $tX$ admits a weighting, and call it the \emph{magnitude function} of $X$.
Willerton showed the asymptotic behavior of the magnitude function $\Mag(t X)$ of a compact homogeneous Riemannian manifold $X$ as 
\[
\Mag(t X)=\dfrac{\Gamma(\frac{n}{2}+1)\Vol(X)}{\Gamma(n+1) \pi^{\frac{n}{2}}}\,t^n+o(t^n) \ (t\to \infty)\,,
\]
where $n=\dim X$ (see \cite[Theorem 11]{Willerton2014} for details).
A metric space $X$ is \emph{stably positive definite} if $tX$ is positive definite for every $t>0$.
If $X$ is an ultrametric space,
then so is $tX$ for every $t>0$,
and hence ultrametric spaces are stably positive definite.
For a stably positive definite metric space $X$,
the magnitude function $\Mag(tX)$ is defined as a function of $t\in\R_{>0}$.

\subsection{Box dimensions and magnitude dimensions}
Let $X=(X, d)$ be a compact metric space.
For $t>0$,
the minimum number of closed balls of radius $1/t$ needed to cover $X$ is denoted by $N_{1/t}(X)$.
The \emph{upper box dimension} of $X$ is 
\[
\overline{\dim}_B(X):=\limsup_{t\to \infty}\dfrac{\log{N_{1/t}(X)}}{\log{t}}
\]
and the \emph{lower box dimension} of $X$ is 
\[
\underline{\dim}_B(X):=\liminf_{t\to \infty}\dfrac{\log{N_{1/t}(X)}}{\log{t}}\,.
\]

Meckes introduced the upper and lower magnitude dimensions in~\cite{Meckes2015}.
When $X$ is stably positive definite,
the \emph{upper magnitude dimension} and \emph{lower magnitude dimension} of $X$ are defined by
\[
\overline{\dim}_{\Mag}(X)=\limsup_{t\to \infty}\dfrac{\log \Mag(tX)}{\log t}\,,\quad
\underline{\dim}_{\Mag}(X)=\liminf_{t\to \infty}\dfrac{\log \Mag(tX)}{\log t}\,.
\]

\begin{theo}[{\cite[Corollaries 7.3 and 7.4]{Meckes2015}}\label{theo:Meckes dimensions}]
Let $X=(X,d)$ be a compact metric space.
If $X$ is either an ultrametric space or a subset of Euclidean space equipped with the induced Euclidean metric, then
\[
\overline{\dim}_B(X)=\overline{\dim}_{\Mag}(X)\,,\quad
\underline{\dim}_B(X)=\underline{\dim}_{\Mag}(X)\,.
\]
\end{theo}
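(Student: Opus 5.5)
The plan is to reduce both comparisons to two inequalities between $\Mag(tX)$ and covering/packing numbers of $X$ at a scale close to $1/t$. Lower bounds come from test measures in the variational formula. Upper bounds come from a structure-specific argument: finite approximation and induction in the ultrametric case, Sobolev/Bessel potential theory in the Euclidean case. Let $P_\varepsilon(X)$ be the maximal cardinality of an $\varepsilon$-separated subset; recall $N_{\varepsilon}(X)\le P_\varepsilon(X)\le N_{\varepsilon/2}(X)$, so either count may be used for the box dimensions.

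\emph{Lower bound.} Take a maximal $\varepsilon$-separated set $F\subset X$ and let $\mu$ be counting measure on $F$. The variational definition of $\Mag$ gives
\[
\Mag(tX)\ \ge\ \frac{|F|^2}{\sum_{x,y\in F}e^{-td(x,y)}}\ \ge\ \frac{P_\varepsilon}{1+P_\varepsilon e^{-t\varepsilon}}.
\]
Choose $\varepsilon=\varepsilon(t)=c\log t/t$. When $\overline{\dim}_B(X)<\infty$, take $c$ larger than $\overline{\dim}_B(X)+1$. Then $P_\varepsilon e^{-t\varepsilon}\to0$, so $\Mag(tX)\gtrsim P_{c\log t/t}(X)$. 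Since $\log(t/(c\log t))/\log t\to1$, taking $\limsup$ and $\liminf$ of $\log/\log t$ yields $\dim_{\Mag}\ge\dim_B$, in both upper and lower versions. If $\overline{\dim}_B(X)=\infty$ (possible in the ultrametric case), I would first handle the claim for $\overline{\dim}$ with $\varepsilon$ chosen along a subsequence, using a truncation $P'\le P_\varepsilon$ of the separated set. For $\underline{\dim}$, the same estimate with the subset truncated to size $\min(P_\varepsilon,t^{K})$ for arbitrary $K$ suffices.

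\emph{Upper bound, ultrametric case.} By Theorem~\ref{theo:magnitude approximation} it suffices to treat finite ultrametric spaces, provided the bound depends only on covering numbers (finite $\varepsilon$-nets approximate $X$ in $d_H$). In a finite ultrametric space, the closed balls of radius $1/t$ partition $X$ into $N=N_{1/t}(X)$ blocks, which are at mutual distance $>1/t$. I would prove by induction on the ultrametric tree that $\Mag(Y)\le 1+\operatorname{diam}(Y)$ for a finite ultrametric space $Y$, and more generally that $\Mag(tX)\le C\,N_{1/t}(X)$. The tool is the explicit recursive weighting: for a ball split into sub-balls at a common distance $D$, the weighting solves a rank-one perturbation of a block-diagonal system. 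This gives
\[
\Mag=\frac{\sum_i \Mag_i}{1+(1-e^{-D})^{-1}e^{-D}\cdots}
\]
a Sherman--Morrison-type formula, which is monotone and subadditive over the blocks. Applying it at the top levels, where distances exceed $1/t$, gives $\Mag(tX)\le\sum_i\Mag(tB_i)$. Each $tB_i$ has diameter $\le1$, so each term is bounded by a constant. \textbf{This step is the main obstacle}: one must check carefully that the recursion really makes $\Mag$ subadditive over the top-level split and bounded on unit-diameter pieces.

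\emph{Upper bound, Euclidean case.} For $X\subset\R^n$, use Meckes' representation of $\Mag(A)$ as a constant times $\inf\{\|h\|^2_{H^{(n+1)/2}}: h\equiv1 \text{ on } A\}$. Take $h=\mathbf 1_{(tX)_{1/2}}*\phi$ with a fixed smooth bump $\phi$ supported in the ball of radius $1/2$. Its Sobolev norm squared is $\le C_n\Vol((tX)_1)$. The volume satisfies $\Vol((tX)_1)\le N_{1/t}(X)\Vol(B_2)$, since $(tX)_1$ is covered by $N_{1/t}(X)$ balls of radius $2$. Hence $\Mag(tX)\le C_n N_{1/t}(X)$. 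Combining with the lower bound and taking $\limsup$/$\liminf$ of $\log/\log t$ gives the stated equalities.
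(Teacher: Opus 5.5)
The paper does not prove this statement. It is imported from Meckes~\cite[Corollaries 7.3 and 7.4]{Meckes2015}, so your argument has to stand on its own.

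Most of it does. The lower bound is correct: you use counting measure on a maximal $\varepsilon$-separated set with $\varepsilon=c\log t/t$, and truncate to $\min(P_\varepsilon,t^K)$ to cover $\overline{\dim}_B X=\infty$. The Euclidean upper bound is also correct. The function $h=\mathbf 1_{(tX)_{1/2}}*\phi$ equals $1$ near $tX$, and
\[
\|h\|^2_{H^{(n+1)/2}}\le\sup_\xi(1+|\xi|^2)^{(n+1)/2}|\hat\phi(\xi)|^2\,\Vol((tX)_{1/2})\le C_n N_{1/t}(X).
\]
Granted Meckes' Sobolev-space description of magnitude, that settles the Euclidean case.

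The gap is the ultrametric upper bound, exactly at the step you flagged. The lemma you propose to prove there is false. Take $n$ points at mutual distance $D$; this is an ultrametric space with $\Mag=n/(1+(n-1)e^{-D})\to e^{D}$. For instance, five points at distance $1$ give $5/(1+4e^{-1})\approx 2.02>1+\operatorname{diam}$. Your Sherman--Morrison display trails off in $\cdots$ and is not an actual formula, so subadditivity is asserted rather than proved.

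Here is what you need. Let $Y$ be a finite ultrametric space of diameter $D$, let $B_1,\dots,B_k$ be the classes of $d(x,y)<D$, and set $M_i=\Mag(B_i)$ and $q=e^{-D}$. Putting $\lambda_i w_{B_i}$ on $B_i$, where $w_{B_i}$ is the weighting of $B_i$, forces $\lambda_i(1-qM_i)=1-q\Mag(Y)$. Hence
\[
\Mag(Y)=\frac{A}{1+qA},\qquad A=\sum_{i=1}^k b_i,\qquad b_i=\frac{M_i}{1-qM_i}.
\]
By induction, $0<M_i\le e^{\operatorname{diam}B_i}<e^{D}$, so $b_i>0$ and $\Mag(Y)<e^{D}$. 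The correct bound is therefore $\Mag(Y)\le e^{\operatorname{diam}Y}$, not $1+\operatorname{diam}Y$.

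Subadditivity follows from $f(b)=b/(1+qb)$. This function is concave on $[0,\infty)$ with $f(0)=0$, hence subadditive, and $f(b_i)=M_i$. So $\Mag(Y)=f(\sum_i b_i)\le\sum_i M_i$.

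Now split repeatedly until every piece has $td$-diameter at most $1$; these pieces are exactly the closed $1/t$-balls of $X$. This gives $\Mag(tX)\le e\,N_{1/t}(X)$ for finite $X$. Theorem~\ref{theo:magnitude approximation} extends it to compact $X$, because in an ultrametric space every point of a ball is a centre, so $N_{1/t}(Y)\le N_{1/t}(X)$ for $Y\subset X$.

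An equivalent finish, closer to the maximum-diversity argument of the cited paper, uses the same recursion. It gives $\lambda_i=(1-q\Mag(Y))/(1-qM_i)>0$, so weightings of finite ultrametric spaces are positive. The supremum defining $\Mag(tX)$ is then attained at a nonnegative measure. For $\mu\ge0$, Cauchy--Schwarz over the $N_{1/t}(X)$ balls gives $Z_{tX}(\mu,\mu)\ge e^{-1}\mu(X)^2/N_{1/t}(X)$, and the same bound $\Mag(tX)\le e\,N_{1/t}(X)$ follows.
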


The results of Meckes compare box and magnitude dimensions with respect to the same metric,
whereas our result relates dimensions arising from two distinct metric structures on the same underlying set.
To the best of our knowledge,
our result provides the first instance in which the box dimension associated with one metric is recovered as the magnitude dimension associated with a different metric on the same underlying compact set.

\subsection{The homogeneous Moran set $E=E(m, \mathbf{r})$ and the ultrametric $d_u$}\label{sec:Moran set and ultrametric}
In this subsection,
we refer to~\cite{Moran1946} for the original construction of Moran sets.
We denote the length of a bounded interval $J\subset \R$ by $\abs{J}$.
For a finite family $\{J_\ell\}_{\ell=1}^q$ of subintervals contained in an interval $I$,
we call the connected components of
$\displaystyle I\setminus(J_1\cup\dots\cup J_q)$
the \emph{gaps} of $\{J_\ell\}_{\ell=1}^q$ in $I$.

\smallskip
We now define the homogeneous Moran set considered in this paper.
Fix $m\in\N_{\geq2}$ and a sequence
$\mathbf{r}=(r_j)_{j\geq0}$ of positive numbers satisfying
$r_0=1$ and $r_j<m^{-1}$ for $j\geq1$.
Starting from $E_0=[0,1]$,
we construct a decreasing sequence $\{E_k\}_{k\geq 0}$ of compact subsets of $[0,1]$ as follows.
Informally,
to pass from step $k$ to step $k+1$,
each interval constructed at step $k$ is replaced by $m$ mutually disjoint closed subintervals with contraction ratio $r_{k+1}$.
These subintervals are indexed from left to right and are placed so that the
left endpoint of the parent interval coincides with the left endpoint of the
leftmost subinterval, the right endpoint of the parent interval coincides with
the right endpoint of the rightmost subinterval, and the $m-1$ gaps between
consecutive subintervals have equal length.

\smallskip
To make this construction precise, let $[m]=\{1,\ldots,m\}$ and let $[m]^k$
be the $k$-fold Cartesian product of $[m]$, that is, $[m]^k=\Set{\mathbf{i}=(i_1,\ldots,i_k)|1\leq i_1,\ldots,i_k\leq m}$.
For $k=0$,
we put $[m]^0=\{\emptyset\}$,
where $\emptyset$ denotes the empty word,
and set $I_{\emptyset}=[0,1]$.
For $\mathbf{i}=(i_1,\ldots,i_\ell)\in[m]^\ell$ and $j\in[m]$,
we denote by
\[
(\mathbf{i},j)=(i_1,\ldots,i_\ell,j)\in[m]^{\ell+1}
\]
the concatenation of $\mathbf{i}$ and $j$.
When $\mathbf{i}=\emptyset$, we adopt the convention
\[
(\emptyset,j)=(j).
\]

Fix $k\in\Z_{\geq0}$ and suppose that a family $\Set{I_{\mathbf{i}}|\mathbf{i}\in[m]^\ell,\ 0\leq\ell\leq k}$ of closed intervals has been constructed so that, for each
$0\leq\ell\leq k$,
\[
E_\ell
=
\bigcup_{\mathbf{i}\in[m]^\ell}I_{\mathbf{i}},
\quad
\abs{I_{\mathbf{i}}}
=
r_0\cdots r_\ell
\quad
\text{for every }\mathbf{i}\in[m]^\ell.
\]
Moreover, for every $0\leq\ell\leq k-1$ and every
$\mathbf{i}\in[m]^\ell$,
the intervals
\[
I_{(\mathbf{i},1)},\ldots,I_{(\mathbf{i},m)}
\]
are indexed from left to right,
the left endpoint of $I_{\mathbf{i}}$ coincides with the left endpoint of
$I_{(\mathbf{i},1)}$,
the right endpoint of $I_{\mathbf{i}}$ coincides with the right endpoint of
$I_{(\mathbf{i},m)}$,
and the gaps between consecutive intervals
$I_{(\mathbf{i},j)}$ and $I_{(\mathbf{i},j+1)}$
have equal length for $1\leq j\leq m-1$.

\smallskip
For each $\mathbf{i}\in[m]^k$,
choose $m$ mutually disjoint closed subintervals
\[
I_{(\mathbf{i},1)},\ldots,I_{(\mathbf{i},m)}
\subset I_{\mathbf{i}},
\]
indexed from left to right, such that
\[
\abs{I_{(\mathbf{i},j)}}
=
r_0\cdots r_{k+1}
\qquad
(1\leq j\leq m),
\]
the left endpoint of $I_{\mathbf{i}}$ coincides with the left endpoint of
$I_{(\mathbf{i},1)}$,
the right endpoint of $I_{\mathbf{i}}$ coincides with the right endpoint of
$I_{(\mathbf{i},m)}$,
and the gaps between consecutive intervals
$I_{(\mathbf{i},j)}$ and $I_{(\mathbf{i},j+1)}$
have equal length for $1\leq j\leq m-1$.
Each of these gaps has length
\[
\frac{r_0\cdots r_k-mr_0\cdots r_{k+1}}{m-1}
=
\frac{r_0\cdots r_k(1-mr_{k+1})}{m-1}.
\]
We then define
\[
E_{k+1}
=
\bigcup_{\mathbf{i}\in[m]^{k+1}}I_{\mathbf{i}}.
\]
Finally, the associated homogeneous Moran set is
\[
E
=
\bigcap_{k=0}^{\infty}E_k.
\]
We write $E(m,\mathbf{r})$ when the dependence on $m$ and $\mathbf{r}$ needs
to be emphasized, and simply write $E$ otherwise.
The $m^k$ intervals $I_{\mathbf{i}}$, $\mathbf{i}\in[m]^k$, are called the
intervals of \emph{step} $k$.

\smallskip
We define an ultrametric on the homogeneous Moran set $E=E(m,\mathbf{r})$ via its coding.
Let $\Omega_m$ denote the set of sequences with entries in $[m]$,
that is,
\[
\Omega_m=\Set{\mathbf{i}=(i_k)_{k\in \N}|i_k\in[m]\text{ for every }k\in\N}.
\]
Define
\[
R_k=r_0\cdots r_k\quad (k\in\mathbb{Z}_{\geq0}).
\]
For $\mathbf{i}, \mathbf{j}\in \Omega_m$,
set
\[
\presentation{\mathbf{i}}{\mathbf{j}}=
\begin{cases}
0 & (i_1\neq j_1) \\
\infty & (\mathbf{i}=\mathbf{j}) \\
\max{\Set{k\in \N|i_\ell=j_\ell\text{ for }1\leq\ell\leq k}} & \text{otherwise}
\end{cases}
\]
Thus, $\presentation{\mathbf{i}}{\mathbf{j}}$ is the length of the longest common prefix of $\mathbf{i}$ and $\mathbf{j}$, with the convention that it is $\infty$ when $\mathbf{i}=\mathbf{j}$.
Then define
\[
d_u(\mathbf{i}, \mathbf{j})=
\begin{cases}
0 & (\mathbf{i}=\mathbf{j}) \\
R_{\presentation{\mathbf{i}}{\mathbf{j}}} & \text{otherwise}
\end{cases}.
\]
It is easy to see that $d_u$ is an ultrametric on $\Omega_m$ that induces the product topology.
In particular,
the ultrametric space $(\Omega_m, d_u)$ is compact.

\smallskip
Since the intervals of each step are mutually disjoint,
for every $k\in\N$, each point $x\in E$ is contained in a unique interval of step $k$.
We denote by $\mathbf{i}_k(x)\in[m]^k$ the index of the interval of step $k$ containing $x$.
The indices $\mathbf{i}_k(x)$ are compatible in the following sense.
For every $k\in\N$, there exists $i_{k+1}\in[m]$ such that
\[
\mathbf{i}_{k+1}(x)=(\mathbf{i}_k(x),i_{k+1}).
\]
Conversely, for every $\mathbf{i}=(i_k)_{k\in\N}\in\Omega_m$,
the intervals
\[
I_{(i_1)}
\supset I_{(i_1,i_2)}
\supset\cdots
\]
form a decreasing sequence of nonempty compact intervals whose lengths tend to zero.
Hence their intersection consists of a single point of $E$.
Thus, we obtain a bijection $\sigma:E\to\Omega_m$,
called the \emph{coding},
such that if $\sigma(x)=(i_k)_{k\in \N}$,
then $\mathbf{i}_k(x)=(i_1,\dots,i_k)$ for $k\in \N$.
Via the bijection $\sigma$,
we pull back $d_u$ to an ultrametric on $E$, which we also denote by $d_u$.
The compact ultrametric space $(E, d_u)$ is denoted by $E_u$.

\section{The magnitude function of the compact ultrametric space $E_u$}
In this section,
we compute the magnitude function of the compact ultrametric space $E_u$ defined in Section~\ref{sec:Moran set and ultrametric}.
For each $n\geq 1$ and $i_1,\dots,i_n\in [m]$,
choose a point $x_{i_1,\dots,i_n}\in E$ such that the first $n$ coordinates of
$\sigma(x_{i_1,\dots,i_n})$ are $i_1,\dots,i_n$.
The set of these points is denoted by $E_n$,
that is,
\begin{equation*}
E_n=\Set{x_{i_1,\dots,i_n}|i_1,\dots,i_n\in[m]}\,.
\end{equation*}
We denote by $E_{u,n}$ the set $E_n$ equipped with the metric induced from $E_u$.

\begin{lem}\label{lem:approximation of moran set}
The sequence $\{E_{u, n}\}_{n\in \N}$ converges to $E_u$ in the Hausdorff distance.
\end{lem}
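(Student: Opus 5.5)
Since $E_{u,n}\subset E_u$, the Hausdorff distance reduces to a one-sided quantity:
\[
d_H(E_{u,n},E_u)=\sup_{x\in E}\min_{y\in E_n} d_u(x,y).
\]
So the plan is to bound, uniformly in $x\in E$, the $d_u$-distance from $x$ to a suitably chosen point of $E_n$, and then to show that this bound tends to $0$.

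Fix $x\in E$ and write $\sigma(x)=(i_k)_{k\in\N}$. The natural candidate is $y=x_{i_1,\dots,i_n}\in E_n$, because by construction the first $n$ coordinates of $\sigma(y)$ agree with those of $\sigma(x)$. There are two cases.
\begin{itemize}
\item If $y=x$, then $d_u(x,y)=0$.
\item Otherwise the codings share a common prefix of length $\presentation{\sigma(x)}{\sigma(y)}\ge n$. Since $0<r_j<m^{-1}<1$, the sequence $R_k=r_0\cdots r_k$ is decreasing, so
\[
d_u(x,y)=R_{\presentation{\sigma(x)}{\sigma(y)}}\le R_n.
\]
\end{itemize}
In both cases $d_u(x,y)\le R_n$, and therefore $d_H(E_{u,n},E_u)\le R_n$ for every $n$.

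It remains to check that $R_n\to0$. This follows from $R_n\le m^{-n}$, since each $r_j<m^{-1}$ for $j\ge1$ and $r_0=1$.

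There is no real obstacle here. The only point needing care is the monotonicity of $R_k$, which is what turns a common prefix of length at least $n$ into the bound $d_u\le R_n$. That monotonicity uses $r_j<1$, which holds by the standing assumption $r_j<m^{-1}$.
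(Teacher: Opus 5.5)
Your proposal is correct and follows the paper's proof: both take the point $x_{i_1,\dots,i_n}$ that shares its first $n$ coordinates with the given point, deduce $d_u\le R_n$, and conclude from $R_n\to0$. Your extra remarks make explicit what the paper leaves implicit: the Hausdorff distance is one-sided because $E_{u,n}\subset E_u$, $R_k$ is monotone because $r_j<1$, and $R_n\le m^{-n}$.
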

\begin{proof}
We write $d_H$ for the Hausdorff distance and show that $d_H(E_{u, n}, E_u)\leq R_n$ for $n\in \N$.
Fix an arbitrary point $y\in E$ and set $\sigma(y)=(i_k)_{k\in \N}$.
Since $\sigma(y)$ and $\sigma(x_{i_1,\dots,i_n})$ have a common prefix of length at least $n$,
we have $\presentation{\sigma(y)}{\sigma(x_{i_1,\dots,i_n})}\geq n$,
and hence $d_u(y, x_{i_1,\dots,i_n})\leq R_n$.
Since $R_n\to0$ as $n\to\infty$,
the assertion follows.
\end{proof}

\begin{lem}\label{lem:magnitude of n step moran set}
All row sums of the similarity matrix $\zeta_{tE_{u,n}}$ are equal.
In particular, the magnitude function $\Mag(tE_{u,n})$ is given by
\[
\Mag(tE_{u, n})=\dfrac{m^n}{\displaystyle \sum_{k=0}^{n-1}\(1-\dfrac{1}{m}\)m^{n-k}\exp{(-tR_k)}+1}\,.
\]
\end{lem}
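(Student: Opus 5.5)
Since the points of $E_n$ have codings $\sigma(x_{i_1,\dots,i_n})$ whose first $n$ coordinates are all distinct across $E_n$, for $x=x_{\mathbf{i}}$, $y=x_{\mathbf{j}}$ with $\mathbf{i}\neq\mathbf{j}\in[m]^n$ the longest common prefix of $\sigma(x)$ and $\sigma(y)$ has length exactly the length of the longest common prefix of the finite words $\mathbf{i}$ and $\mathbf{j}$, which lies in $\{0,1,\dots,n-1\}$. Hence $d_u(x,y)=R_k$ where $k$ is that common prefix length, independently of the choice of representatives beyond step $n$. The plan is therefore to fix a row $x=x_{\mathbf{i}}$ and count, for each $k\in\{0,\dots,n-1\}$, the number of $\mathbf{j}\in[m]^n$ whose longest common prefix with $\mathbf{i}$ is exactly $k$.

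This count is elementary. Such $\mathbf{j}$ agree with $\mathbf{i}$ in the first $k$ letters, differ at position $k+1$ ($m-1$ choices), and are arbitrary afterwards ($m^{n-k-1}$ choices). The number is therefore $(m-1)m^{n-k-1}=(1-\frac1m)m^{n-k}$, which does not depend on $\mathbf{i}$. Adding the diagonal term $\zeta(x,x)=1$, the row sum of $\zeta_{tE_{u,n}}$ at $x$ is
\[
S(t)=\sum_{k=0}^{n-1}\(1-\dfrac1m\)m^{n-k}\exp(-tR_k)+1,
\]
independent of the row. This proves the first assertion. As a sanity check, the counts total $\sum_{k=0}^{n-1}(m-1)m^{n-k-1}+1=m^n$.

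For the magnitude, I will argue as in the homogeneous case described in Section~\ref{sec:magnitude definition}. Since all row sums equal $S(t)>0$, the constant function $\vb*{w}\equiv S(t)^{-1}$ is a weighting on $tE_{u,n}$, so $\Mag(tE_{u,n})=m^n/S(t)$. This is legitimate because $E_{u,n}$ is a finite ultrametric space and hence positive definite by Proposition~\ref{prop:ultrametic spaces are positive definite}; the weighting is therefore unique and agrees with the compact-space definition. Alternatively, one can note that $tE_{u,n}$ is homogeneous, since permutations of letters at each level of the tree are isometries, but the direct row-sum count already suffices. The only point needing care is the first step: confirming that the distances within $E_n$ depend only on the length-$n$ prefixes, which follows because distinct points of $E_n$ differ within the first $n$ coordinates. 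I expect no real obstacle here.
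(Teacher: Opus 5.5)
Your proposal is correct and follows the paper's proof essentially verbatim. You partition the row at $x$ by the exact common-prefix length $k\in\{0,\dots,n-1\}$, count $(m-1)m^{n-k-1}$ points at distance $R_k$, add the diagonal term, and take the constant weighting $S(t)^{-1}$ to get $\Mag(tE_{u,n})=m^n/S(t)$. Your additional remarks are also correct details that the paper leaves implicit: distances within $E_n$ do not depend on the representatives beyond step $n$, and positive definiteness makes the finite and compact definitions of magnitude agree.
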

\begin{proof}
Fix $x=x_{i_1,\dots,i_n}\in E_{u,n}$ and denote by $S_n(t,x)$ the row sum of $\zeta_{tE_{u,n}}$ corresponding to $x$,
that is,
\begin{equation*}
S_n(t, x)=\sum_{y\in E_{u,n}}\exp{(-td_u(x, y))}.
\end{equation*}
For $k\in{\{0,\dots,n\}}$,
we define a subset $E_{u,n}(x, k)$ of $E_{u,n}$ by 
\begin{align*}
E_{u,n}(x, 0)&=\Set{y\in E_{u,n}|j_1\neq i_1},\\
E_{u,n}(x, k)&=\Set{y\in E_{u,n}|j_\ell=i_\ell\text{ for }1\leq\ell\leq k, \ j_{k+1}\neq i_{k+1}} \ (1\leq k\leq n-1),\\
E_{u,n}(x, n)&=\Set{y\in E_{u,n}|j_1=i_1,\dots,j_n=i_n},
\end{align*}
where $\sigma(y)=(j_\ell)_{\ell\in \N}$ for $y\in E_{u, n}$.
It follows that $E_{u,n}(x, n)=\{x\}$ and $E_{u,n}=\displaystyle \bigsqcup_{k=0}^n E_{u,n}(x, k)$,
and hence
\[
S_n(t, x)=\sum_{k=0}^{n-1}\sum_{y\in E_{u,n}(x, k)}\exp{(-t d_u(x, y))}+1\,.
\]
Since $d_u(x, y)=R_k$ for  $0\leq k\leq n-1$ and $y\in E_{u, n}(x, k)$,
we have
\[
S_n(t, x)=\sum_{k=0}^{n-1}\#E_{u,n}(x, k)\exp{(-t R_k)}+1\,.
\]
For $0\leq k\leq n-1$ and $y\in E_{u,n}(x, k)$ with $\sigma(y)=(j_\ell)_{\ell\in \N}$,
there are $m-1$ choices for $j_{k+1}$, while each of $j_{k+2},\dots,j_n$ has $m$ choices.
Therefore,
\[
\#E_{u,n}(x,k)=(m-1)m^{n-k-1}=\(1-\dfrac{1}{m}\)m^{n-k},
\]
and hence
\[
S_n(t, x)=\sum_{k=0}^{n-1}\(1-\dfrac{1}{m}\)m^{n-k}\exp{(-t R_k)}+1\,.
\]
In particular,
all row sums of $\zeta_{tE_{u,n}}$ are equal.
It follows that the function $\vb*{w}:E_{u,n}\to\R$ defined by
\[
\vb*{w}(x)=S_n(t,x)^{-1}
\]
is a weighting on $E_{u,n}$.
Since $\#E_{u,n}=m^n$,
the assertion follows.
\end{proof}

\begin{prop}\label{prop:magnitude function of Eu}
The magnitude function of $E_u$ is given by
\[
\Mag(t E_u)=\dfrac{1}{\,\displaystyle \sum_{k=0}^\infty \(1-\dfrac{1}{m}\)m^{-k}\exp{(-t R_k)}\,}\,.
\]
\end{prop}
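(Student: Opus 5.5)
The plan is to pass to the limit $n\to\infty$ in the closed formula of Lemma~\ref{lem:magnitude of n step moran set}, using the approximation theorem of Meckes. Fix $t>0$. The space $tE_u$ is a compact ultrametric space, hence positive definite by Proposition~\ref{prop:ultrametic spaces are positive definite}. Lemma~\ref{lem:approximation of moran set} gives $d_H(E_{u,n},E_u)\le R_n$ for the metric $d_u$. After scaling, the same sets satisfy $d_H(tE_{u,n},tE_u)\le tR_n$, which also tends to $0$. Theorem~\ref{theo:magnitude approximation} therefore yields
\[
\Mag(tE_u)=\lim_{n\to\infty}\Mag(tE_{u,n}).
\]
It remains only to compute this limit.

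Divide numerator and denominator in Lemma~\ref{lem:magnitude of n step moran set} by $m^n$:
\[
\Mag(tE_{u,n})=\frac{1}{\displaystyle\sum_{k=0}^{n-1}\(1-\frac1m\)m^{-k}\exp(-tR_k)+m^{-n}}.
\]
The terms $\(1-\frac1m\)m^{-k}e^{-tR_k}$ are positive and bounded by $\(1-\frac1m\)m^{-k}$. The series therefore converges, and its sum lies in $(0,1]$ (the $k=0$ term alone is positive). Since also $m^{-n}\to0$, the denominators converge to the positive number $\sum_{k=0}^\infty\(1-\frac1m\)m^{-k}e^{-tR_k}$. Taking reciprocals, which is continuous away from $0$, gives the stated formula.

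The only step needing care is applying Theorem~\ref{theo:magnitude approximation} correctly. That theorem requires the $X_n$ to be compact subspaces of a single compact positive definite space $X$. Here $tE_{u,n}\subset tE_u$ carries the restricted metric, is finite and hence compact, and $tE_u$ is positive definite as noted above. So the hypotheses hold. Everything else is elementary convergence of a geometric-type series.
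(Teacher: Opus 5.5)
Your proposal is correct and follows the paper's own route: combine Lemmas~\ref{lem:approximation of moran set} and~\ref{lem:magnitude of n step moran set} with Meckes' approximation theorem (Theorem~\ref{theo:magnitude approximation}), divide by $m^n$, and pass to the limit. Your additional checks (the scaled Hausdorff bound $tR_n$, positive definiteness of $tE_u$, and positivity of the limiting denominator) make explicit details that the paper leaves implicit.
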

\begin{proof}
By Lemmas~\ref{lem:approximation of moran set} and~\ref{lem:magnitude of n step moran set},
together with Theorem~\ref{theo:magnitude approximation},
\begin{align*}
\Mag(tE_u)
&=\lim_{n\to \infty}\Mag(tE_{u,n})\\
&=\lim_{n\to \infty}\dfrac{m^n}{\displaystyle \sum_{k=0}^{n-1}\(1-\dfrac{1}{m}\)m^{n-k}\exp{(-tR_k)}+1}\\
&=\lim_{n\to \infty}\dfrac{1}{\displaystyle \sum_{k=0}^{n-1}\(1-\dfrac{1}{m}\)m^{-k}\exp{(-tR_k)}+m^{-n}}\\
&=\dfrac{1}{\,\displaystyle \sum_{k=0}^\infty \(1-\dfrac{1}{m}\)m^{-k}\exp{(-t R_k)}\,}\,.
\end{align*}
\end{proof}

\section{Proofs of Theorems~\ref{introthm:dimensions} and~\ref{introthm:asymptotics}}
\begin{theo}\label{theo:main1}
For the homogeneous Moran set $E=E(m,\mathbf{r})$ defined in Section~\ref{sec:Moran set and ultrametric},
\[
\overline{\dim}_{\Mag}(E_u)=\overline{\dim}_B(E)\,,\quad
\underline{\dim}_{\Mag}(E_u)=\underline{\dim}_B(E)\,.
\]
\end{theo}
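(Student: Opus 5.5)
Two routes are available. The first uses Theorem~\ref{theo:Meckes dimensions} for the compact ultrametric space $E_u$. It reduces the claim to showing that the box dimensions of $E_u$ coincide with the Euclidean box dimensions of $E$. The second works directly from the explicit formula of Proposition~\ref{prop:magnitude function of Eu}. I would carry out the first route and use the second only as a cross-check. In both, the key quantity is the counting function $k(t)$, defined as the largest $k$ with $R_k\geq 1/t$ (or a comparable choice), together with $m^{k(t)}$.

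\textbf{Step 1: covering numbers of $E_u$.} A closed $d_u$-ball of radius $\rho$ around $x$ is exactly the cylinder of points sharing the first $k$ coordinates with $x$, where $k$ is the smallest index with $R_k\leq\rho$. Since $R_k$ is strictly decreasing, these balls are disjoint cylinders. Hence $N_\rho(E_u)=m^{k}$ with $k=\min\{k: R_k\leq \rho\}$.

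\textbf{Step 2: covering numbers of $E$.} For the Euclidean set, each step-$k$ interval has length $R_k$, so $E$ is covered by $m^k$ balls of radius $R_k$. This gives $N_{R_k}(E)\leq m^k$. For a lower bound I would avoid gaps, since gaps can be tiny when $r_{k+1}$ is close to $1/m$, and this is exactly why the metrics are not bi-Lipschitz. Instead I would use a mass argument with the uniform measure $\mu$ that gives each step-$k$ interval mass $m^{-k}$. Any interval $J$ of length $2\rho$ with $R_{k+1}\leq 2\rho< R_k$ meets at most a bounded number of step-$k$ intervals. The reason is that step-$k$ intervals of length $R_k$ are disjoint, so $J$, which is shorter than $R_k$, meets at most $2$ of them. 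Therefore $\mu(J)\leq 2m^{-k}$ and $N_\rho(E)\geq m^k/2$.

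Combining the two steps, both $N_\rho(E)$ and $N_\rho(E_u)$ are comparable to $m^{k}$ up to a constant factor and a shift of $k$ by $1$. Since $m^{k+1}/m^k=m$ is a bounded factor, the $\log/\log t$ limits agree. Theorem~\ref{theo:Meckes dimensions} applied to the ultrametric $E_u$ then gives the statement.

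\textbf{Main obstacle.} The delicate point is the index bookkeeping in Step 2. One must pin down which $k$ corresponds to which radius and check that the shifts by one in $k$ contribute only multiplicative constants. This holds because each shift changes the count by the fixed factor $m$. The cross-check via Proposition~\ref{prop:magnitude function of Eu} goes as follows. Bound $\sum_k(1-1/m)m^{-k}e^{-tR_k}$ below by the single term $k=k(t)$, which gives $\gtrsim m^{-k(t)}$. Bound it above by splitting at $k(t)$. The terms with $k\leq k(t)$ form a sum $\sum m^{-k}e^{-tR_k}$ in which $tR_k\geq t R_{k(t)}\, m^{k(t)-k}\geq m^{k(t)-k}$, because $R_{k}/R_{k+1}=1/r_{k+1}>m$. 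The terms with $k>k(t)$ form a geometric tail $\lesssim m^{-k(t)}$. Hence $\Mag(tE_u)\asymp m^{k(t)}$, consistent with Step 1.
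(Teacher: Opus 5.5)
Your proof is correct, but your primary route differs from the paper's. The paper does not use Theorem~\ref{theo:Meckes dimensions} here. It takes the explicit formula $\Mag(tE_u)=1/D(t)$ of Proposition~\ref{prop:magnitude function of Eu} and proves $D(t)\asymp m^{-k(t)}$ with $k(t)=\min\{k:R_k\le 1/t\}$. It then separately proves $m^{k(t)-2}\le N_{1/t}(E)\le m^{k(t)}$ by counting endpoints and gaps of step-$k(t)$ intervals inside a covering interval. Your cross-check is essentially that proof.

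You instead show directly that $E$ and $E_u$ have comparable covering numbers at every scale:
\[
N_\rho(E_u)/(2m^2)\le N_\rho(E)\le N_\rho(E_u)=m^{k_0(\rho)},
\qquad k_0(\rho)=\min\{k:R_k\le\rho\}.
\]
Meckes' theorem for the ultrametric space $E_u$ then finishes the argument. Your route has three advantages. Section~3 is not needed for Theorem~\ref{theo:main1}. Your observation that an interval shorter than $R_k$ meets at most two step-$k$ intervals gives a simpler lower bound than the paper's gap count. And it shows that Theorem~\ref{theo:main1} \emph{is} a consequence of Meckes' theorem. This is not via bi-Lipschitz invariance, which fails when $\sup_k r_k=1/m$, but via a scale-by-scale comparison of covering numbers, which is all that box dimension detects.

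The paper's route is self-contained once the explicit formula is available. It also yields the sharper estimate $A\Mag(tE_u)\le N_{1/t}(E)\le B\Mag(tE_u)$, i.e.\ comparability of the functions themselves, not only equality of growth exponents. The same formula is then reused for Theorem~\ref{introthm:asymptotics}.

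There are two harmless bookkeeping slips. First, the shift between $k_0(\rho)$ and your $k$ (defined by $R_{k+1}\le 2\rho<R_k$) can be $2$ rather than $1$. Second, in the cross-check, take $k(t)$ to be the largest $k$ with $R_k\ge 1/t$. Then the single term at $k(t)$ gives no uniform lower bound on $D(t)$, because $tR_{k(t)}<1/r_{k(t)+1}$ can be arbitrarily large when $\inf_j r_j=0$. Use the term at $k(t)+1$ instead, where $tR_{k(t)+1}<1$, or adopt the paper's convention for $k(t)$.
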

\begin{proof}
By Proposition~\ref{prop:magnitude function of Eu},
we denote the denominator of $\Mag(tE_u)$ by $D(t)$,
that is,
\[
D(t)=\sum_{k=0}^\infty \(1-\dfrac{1}{m}\)m^{-k}\exp{(-t R_k)}\,.
\]
For $t>1$, set $k(t)=\min\Set{k\in\N| R_k\leq \dfrac{1}{t}}$.
It follows that
\[
R_{k(t)}\leq \dfrac{1}{t}<R_{k(t)-1}\,.
\]

First,
we show that there are positive constants $c>0$ and $C>0$ such that
\begin{equation}
c\,m^{-k(t)}\leq D(t)\leq C\,m^{-k(t)}\,.\label{eq:estimates of D}
\end{equation}
For the lower bound,
\begin{equation*}
D(t)\geq \(1-\dfrac{1}{m}\)m^{-k(t)}\exp{(-t R_{k(t)})}\geq \(1-\dfrac{1}{m}\)e^{-1}\,m^{-k(t)}\,.
\end{equation*}
For the upper bound,
\begin{align}
D(t)
&=\sum_{k=0}^{k(t)-1}\(1-\dfrac{1}{m}\)m^{-k}\exp{(-t R_k)}+\sum_{k=k(t)}^\infty \(1-\dfrac{1}{m}\)m^{-k}\exp{(-t R_k)} \notag\\
&\leq \sum_{k=0}^{k(t)-1}\(1-\dfrac{1}{m}\)m^{-k}\exp{(-t R_k)}+\sum_{k=k(t)}^\infty \(1-\dfrac{1}{m}\)m^{-k} \notag\\
&\leq \sum_{k=0}^{k(t)-1}\(1-\dfrac{1}{m}\)m^{-k}\exp{(-t R_k)}+m^{-k(t)}\,.\label{eq:estimate of D from above}
\end{align}
In order to give an estimate of the first term of \eqref{eq:estimate of D from above} from above,
we derive a lower bound for $tR_k$.
For $0\leq k\leq k(t)-1$,
\[
R_k=r_1\dots r_k=\dfrac{R_{k(t)-1}}{r_{k+1}\dots r_{k(t)-1}}\geq \dfrac{R_{k(t)-1}}{m^{-(k(t)-1-k)}}\geq \dfrac{1}{t}\,m^{k(t)-(k+1)},
\]
and hence
\[
tR_k\geq m^{k(t)-(k+1)}\,.
\]
Therefore,
we see from \eqref{eq:estimate of D from above} that
\begin{align*}
D(t)
&\leq \sum_{k=0}^{k(t)-1}\(1-\dfrac{1}{m}\)m^{-k}\exp{(-m^{k(t)-1-k})}+m^{-k(t)}\\
&=\sum_{k=0}^{k(t)-1}\(1-\dfrac{1}{m}\)m^{k-(k(t)-1)}\exp{(-m^k)}+m^{-k(t)}\\
&=\(\sum_{k=0}^{k(t)-1}\(m-1\)m^{k}\exp{(-m^k)}+1\)m^{-k(t)}\\
&\leq \(\sum_{k=0}^{\infty}\(m-1\)m^{k}\exp{(-m^k)}+1\)m^{-k(t)}\,,
\end{align*}
where the series on the right-hand side of the last line converges.

\smallskip
Next, 
we establish the following bounds for $N_{1/t}(E)$.
\begin{equation}
m^{k(t)-2}\leq N_{1/t}(E)\leq m^{k(t)}\,.\label{eq:estimates of N}
\end{equation}
Since $E_{k(t)}$ covers $E$ and consists of $m^{k(t)}$ intervals of length $R_{k(t)}\leq 1/t$,
we obtain the upper bound.
To prove the lower bound,
set $N=N_{1/t}(E)$ and take closed intervals $J_1,\dots,J_N$ of lengths at most $2/t$ that cover $E$.
We show that each $J_i$ contains at most $2m^2$ endpoints of intervals in $E_{k(t)}$.
Suppose, to the contrary, that $J_i$ contains at least $2m^2+1$ endpoints of intervals in $E_{k(t)}$.
Since $J_i$ is an interval,
we may choose $2m^2+1$ consecutive endpoints contained in $J_i$.
It follows that $J_i$ contains at least $m^2$ consecutive intervals of step $k(t)$ and at least $m^2$ gaps between consecutive such intervals.
Let $g_{k(t)}$ denote the common length of the gaps created at step $k(t)$.
By the construction of $E$,
\[
g_{k(t)}=\dfrac{R_{k(t)-1}-mR_{k(t)}}{m-1}\,.
\]
Enumerate the gaps in $E_{k(t)}$ from left to right as $\Gamma_1,\Gamma_2,\dots,\Gamma_{m^{k(t)}-1}$.
Then, for $1\leq j\leq m^{k(t)}-1$,
$\Gamma_j$ is also a gap in $E_{k(t)-1}$ if and only if $j\in m\Z$.
This observation tells us that there are $m^2-m$ gaps of length $g_{k(t)}$ among any $m^2$ consecutive gaps in $E_{k(t)}$.
Therefore,
the interval $J_i$ contains at least $m^2-m$ gaps of length $g_{k(t)}$,
so
\[
\abs{J_i}>m^2R_{k(t)}+(m^2-m)g_{k(t)}=mR_{k(t)-1}\,.
\]
Since $\abs{J_i}\leq \dfrac{2}{t}<2R_{k(t)-1}\leq mR_{k(t)-1}$,
we obtain a contradiction,
so that each covering interval $J_i$ contains at most $2m^2$ endpoints in $E_{k(t)}$.
All endpoints of the intervals in $E_{k(t)}$ are covered by $J_1,\dots,J_N$,
so $2m^{k(t)}\leq N\cdot 2m^2$,
and hence we obtain the estimate of $N_{1/t}(E)$ from below.

By the inequalities \eqref{eq:estimates of D} and \eqref{eq:estimates of N},
there are positive constants $A, B>0$ such that 
\[
A \Mag(tE_u)\leq N_{1/t}(E)\leq B \Mag(t E_u),
\]
and hence the assertion follows.
\end{proof}
Since the upper and lower box dimensions of compact metric spaces are invariant under bi-Lipschitz equivalence,
if $E$ and $E_u$ are bi-Lipschitz equivalent, then Theorem~\ref{theo:main1} is an immediate consequence of Theorem~\ref{theo:Meckes dimensions}.
To clarify the difference between our result and Meckes' result,
we characterize when the ultrametric $d_u$ is bi-Lipschitz equivalent to the Euclidean metric $d$.

\begin{prop}\label{prop:difference of metrics}
The ultrametric $d_u$ and the Euclidean metric $d$ on $E$ are bi-Lipschitz equivalent if and only if $\displaystyle \sup_{k\geq 1} r_k < m^{-1}$\,.
\end{prop}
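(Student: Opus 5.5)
Let $g_k=\frac{R_{k-1}(1-mr_k)}{m-1}$ denote the common length of the gaps created at step $k$. Throughout we use one basic comparison. If $x\neq y$ in $E$ and $\presentation{\sigma(x)}{\sigma(y)}=k$, then $x$ and $y$ lie in a common interval $I_{\mathbf{i}}$ of step $k$. They lie in different children $I_{(\mathbf{i},a)}$ and $I_{(\mathbf{i},b)}$ of step $k+1$. Hence
\[
d(x,y)\leq \abs{I_{\mathbf{i}}}=R_k=d_u(x,y),
\]
so $d\leq d_u$ always holds. Bi-Lipschitz equivalence is therefore equivalent to a lower bound $d(x,y)\geq c\,d_u(x,y)$ with a constant $c>0$ independent of $x$ and $y$.

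\textbf{Sufficiency.} Assume $\rho:=\sup_{k\geq1}r_k<m^{-1}$. For $x,y$ as above, the children $I_{(\mathbf{i},a)}$ and $I_{(\mathbf{i},b)}$ are distinct intervals of step $k+1$ inside $I_{\mathbf{i}}$. They are separated by at least one gap of step $k+1$, so
\[
d(x,y)\geq g_{k+1}=\frac{R_k(1-mr_{k+1})}{m-1}\geq \frac{1-m\rho}{m-1}\,R_k=\frac{1-m\rho}{m-1}\,d_u(x,y).
\]
Together with $d\leq d_u$, this gives bi-Lipschitz equivalence with constant $(1-m\rho)/(m-1)>0$.

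\textbf{Necessity.} Suppose $\sup_k r_k=m^{-1}$. Since every $r_k<m^{-1}$, there is a sequence $k_j\to\infty$ with $1-mr_{k_j}\to0$; the supremum is approached along a subsequence, or if some $r_k$ attained it we would already be done, but by assumption none does. For each $j$, fix an interval $I_{\mathbf{i}}$ of step $k_j-1$ and consider adjacent children $I_{(\mathbf{i},1)}$ and $I_{(\mathbf{i},2)}$. Let $x$ be the right endpoint of $I_{(\mathbf{i},1)}$ and $y$ the left endpoint of $I_{(\mathbf{i},2)}$.

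The step that needs care is showing that $x,y\in E$. This follows from the construction: endpoints are inherited. The right endpoint of any interval coincides with the right endpoint of its $m$-th child, so it lies in every $E_k$ and hence in $E$. The same holds for left endpoints via the first child.

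For this pair, $\presentation{\sigma(x)}{\sigma(y)}=k_j-1$, so $d_u(x,y)=R_{k_j-1}$. On the other hand,
\[
d(x,y)=g_{k_j}=\frac{R_{k_j-1}(1-mr_{k_j})}{m-1}.
\]
Therefore
\[
\frac{d(x,y)}{d_u(x,y)}=\frac{1-mr_{k_j}}{m-1}\to0 .
\]
This rules out any lower bound $d\geq c\,d_u$ with $c>0$, so the two metrics are not bi-Lipschitz equivalent. $\qquad\blacksquare$

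I expect no real obstacle. The only subtle point is verifying that the gap endpoints belong to $E$, which the endpoint-coincidence convention of the construction handles.
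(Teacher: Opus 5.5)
Your proposal is correct and follows essentially the same route as the paper: you bound $d$ above by the length $R_k$ of the common step-$k$ interval and below by the gap $g_{k+1}=R_k(1-mr_{k+1})/(m-1)$, and for necessity you use the endpoints of a single gap, where the ratio $d/d_u$ equals $(1-mr_k)/(m-1)$. The only differences are that you argue necessity by contrapositive along a subsequence with $r_{k_j}\to m^{-1}$, whereas the paper bounds $C\le(1-mr_k)/(m-1)$ for every $k$ directly, and that you spell out why the gap endpoints lie in $E$, which the paper leaves to the construction.
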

\begin{proof}
First we show that if $\displaystyle \sup_{k\geq 1} r_k<m^{-1}$,
then $d_u$ and $d$ are bi-Lipschitz equivalent.
Set $\displaystyle c=\inf_{k\geq1}\frac{1-mr_k}{m-1}>0$.
We show that
\[
c\,d_u(x,y)\leq d(x,y)\leq d_u(x,y)
\]
for all $x,y\in E$.
Fix distinct points $x,y\in E$.
Suppose that $d_u(x, y)=R_{k-1}$.
It follows that $x$ and $y$ lie in the same interval of step $k-1$ and in two distinct intervals of step $k$.
Since every interval in $E_{k-1}$ has length $R_{k-1}$,
\[
d(x,y)\leq R_{k-1}=d_u(x,y).
\]
Since any gap between consecutive intervals of step $k$ contained in the same interval of step $k-1$ has length $\dfrac{R_{k-1}-mR_k}{m-1}$,
\[
d(x, y)\geq \dfrac{1}{m-1}(R_{k-1}-mR_k)=\dfrac{1-mr_k}{m-1}d_u(x, y)\geq c\,d_u(x, y)\,.
\]

Conversely, suppose that $d$ and $d_u$ are bi-Lipschitz equivalent.
Then there exists $C>0$ such that $d(x,y)\geq C d_u(x,y)$ for all $x,y\in E$.
For each $k\in \N$,
choose two consecutive intervals of step $k$ contained in the same interval of step $k-1$ and consider the gap between these two intervals.
We write $x_k, y_k$ for the endpoints of the gap.
It follows from the construction of $E$ that $x_k, y_k\in E$,
and hence
\[
C\leq \dfrac{d(x_k, y_k)}{d_u(x_k, y_k)}=\dfrac{\frac{1}{m-1}(R_{k-1}-mR_k)}{R_{k-1}}=\dfrac{1-mr_k}{m-1}\,.
\]
Therefore $\displaystyle \inf_{k\geq 1}\dfrac{1-mr_k}{m-1}>0$ or, equivalently $\displaystyle \sup_{k\geq 1}r_k<\frac{1}{m}$.
\end{proof}

\begin{theo}
Suppose that $r_j=r$ for every $j\geq1$.
Set $s=-\log_r m$ and $T=-\log{r}$.
Then,
there exists a positive smooth $T$-periodic function $\widetilde{p}:\R\to\R_{>0}$ such that 
\begin{itemize}
\item[(1)]
$\displaystyle \Mag(t E_u)=\dfrac{t^s}{\,\widetilde{p}(\log t)\,}+o(t^s) \ (t\to \infty)$.
\item[(2)]
$\dfrac{1}{T}\displaystyle \int_0^T \widetilde{p}(x)\,dx=\dfrac{(m-1)\Gamma(s+1)}{\,m\log m\,}$\,.
\end{itemize}
\end{theo}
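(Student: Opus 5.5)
With $r_j=r$ we have $R_k=r^k$, so Proposition~\ref{prop:magnitude function of Eu} gives $\Mag(tE_u)=1/D(t)$ with
\[
D(t)=\Bigl(1-\frac1m\Bigr)\sum_{k=0}^\infty m^{-k}e^{-tr^k}.
\]
The plan is to show that $t^sD(t)=\widetilde p(\log t)+o(1)$ for a smooth, positive, $T$-periodic $\widetilde p$. Substituting $t=e^x$ and using $m^{-k}=r^{ks}=e^{-ksT}$, we get
\[
t^sD(t)=\Bigl(1-\frac1m\Bigr)\sum_{k=0}^\infty e^{s(x-kT)}\exp\bigl(-e^{x-kT}\bigr).
\]
Set $\phi(u)=e^{su}\exp(-e^u)$. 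Then $\phi$ decays like $e^{su}$ as $u\to-\infty$ and superexponentially as $u\to+\infty$. Define
\[
\widetilde p(x)=\Bigl(1-\frac1m\Bigr)\sum_{k\in\Z}\phi(x-kT).
\]
This series, and all its termwise derivatives, converge locally uniformly, because every derivative of $\phi$ is a polynomial in $e^u$ times $e^{su}\exp(-e^u)$ and so has the same decay. Hence $\widetilde p$ is smooth and $T$-periodic, and it is positive since every term is positive.

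The difference is $t^sD(t)-\widetilde p(\log t)=-(1-\frac1m)\sum_{k<0}\phi(x-kT)=-(1-\frac1m)\sum_{j\ge1}\phi(x+jT)$. For $x\ge0$ this is bounded by $\sum_{j\ge1}\exp(s(x+jT)-e^{x+jT})$, which tends to $0$ superexponentially as $x\to\infty$. So $t^sD(t)=\widetilde p(\log t)+o(1)$.

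Since $\widetilde p$ is continuous, positive and periodic, it is bounded below by some $c>0$. Therefore
\[
\Mag(tE_u)=\frac{t^s}{\widetilde p(\log t)+o(1)}=\frac{t^s}{\widetilde p(\log t)}+o(t^s),
\]
which is (1).

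For (2), the standard unfolding of the periodized sum gives
\[
\int_0^T\widetilde p=\Bigl(1-\frac1m\Bigr)\int_{\R}\phi(u)\,du.
\]
Substituting $v=e^u$ turns the integral into $\int_0^\infty v^{s-1}e^{-v}\,dv=\Gamma(s)$. Dividing by $T=-\log r=\log m/s$ gives
\[
\frac1T\int_0^T\widetilde p=\Bigl(1-\frac1m\Bigr)\frac{s\,\Gamma(s)}{\log m}=\frac{(m-1)\Gamma(s+1)}{m\log m}.
\]

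The main obstacle is justifying the smoothness of $\widetilde p$, that is, the termwise differentiation of a bi-infinite series. I would handle it with uniform bounds $|\phi^{(n)}(u)|\le C_n e^{su}$ for $u\le0$ and $|\phi^{(n)}(u)|\le C_ne^{-u}$ for $u\ge0$, and the Weierstrass M-test on compact sets. Everything else is direct computation.
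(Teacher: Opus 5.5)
Your proposal is correct and follows essentially the same route as the paper: your $\widetilde p(x)=(1-\frac1m)\sum_{k\in\Z}\phi(x-kT)$ is exactly the paper's $e^{sx}p(e^x)$, and your tail $(1-\frac1m)\sum_{j\ge1}\phi(x+jT)$ is the paper's $t^sq(t)$. Like the paper, you then use the positive lower bound on $\widetilde p$ for the $o(t^s)$ estimate and unfold the periodized sum to $\Gamma(s)$; the only cosmetic difference is that you read off periodicity from the periodized form rather than from the scaling identity $p(r^{-1}t)=\frac1m p(t)$.
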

\begin{proof}
The proof is similar to the computation of the magnitude of ternary Cantor sets in~\cite{LeinsterWillerton2013}.
We define two functions $p(t), q(t)$ on $\R_{>0}$ as follows.
\[
p(t)=\(1-\dfrac{1}{m}\)\sum_{k=-\infty}^\infty m^{-k}\exp(-tr^k)\,,\quad
q(t)=\(1-\dfrac{1}{m}\)\sum_{k=1}^\infty m^k\exp(-tr^{-k})\,.
\]
It follows from Proposition~\ref{prop:magnitude function of Eu} that
\[
\Mag(tE_u)=\frac{1}{p(t)-q(t)}.
\]
Since $p(r^{-1}t)=\dfrac{1}{m}p(t)$,
we see that
\[
(r^{-1}t)^s\,p(r^{-1}t)
=
t^s\cdot \dfrac{r^{-s}}{m}\cdot p(t)
=t^s\,p(t)\,.
\]
The defining series for $p(t)$ and all of its termwise derivatives converge locally uniformly on $\R_{>0}$ by the Weierstrass $M$-test.
Hence $p$ is smooth on $\R_{>0}$.
We define a smooth positive function $\widetilde{p}(x) \ (x\in \R)$ as follows.
\[
\widetilde{p}(x)=e^{sx}p(e^x)\,.
\]
It follows that $\widetilde{p}(x)$ has period $T=-\log{r}$.

Next,
we show that $\disp \lim_{t\to \infty}t^s q(t)=0$.
For $k\in \N$,
\[
\dfrac{m^{k+1}\exp(-tr^{-(k+1)})}{m^k\exp(-tr^{-k})}=m\exp(-tr^{-k}(r^{-1}-1))\leq m\exp(-t(r^{-1}-1))\,.
\]
Since $\displaystyle \lim_{t\to \infty} \exp(-t(r^{-1}-1))=0$,
for sufficiently large $t>0$,
\[
\dfrac{m^{k+1}\exp(-tr^{-(k+1)})}{m^k\exp(-tr^{-k})}\leq \dfrac{1}{2},
\]
and hence
\begin{align*}
\lim_{t\to \infty}t^sq(t)
&\leq \lim_{t\to \infty}t^s\(1-\dfrac{1}{m}\)\sum_{k=1}^\infty \dfrac{1}{2^{k-1}}\cdot m\exp(-tr^{-1})\\
&=2(m-1)\lim_{t\to \infty}t^s\exp(-tr^{-1})\\
&=0\,.
\end{align*}
Since $\widetilde{p}$ is positive and $T$-periodic,
there exists a constant $c_1>0$ such that $\widetilde{p}(x)\geq c_1$ for all $x\in\R$.
Moreover,
since $\displaystyle \lim_{t\to\infty}t^sq(t)=0$,
we have $\displaystyle \widetilde{p}(\log t)-t^sq(t)\geq \frac{c_1}{2}$ for sufficiently large $t>0$.
Therefore,
for sufficiently large $t>0$,
\begin{align*}
\dfrac{1}{t^s}
\abs{
\Mag(tE_u)
-
\dfrac{t^s}{\widetilde{p}(\log t)}
}
&=
\abs{
\dfrac{1}{t^sp(t)-t^sq(t)}
-
\dfrac{1}{\widetilde{p}(\log t)}
}\\
&=
\abs{
\dfrac{1}{\widetilde{p}(\log t)-t^sq(t)}
-
\dfrac{1}{\widetilde{p}(\log t)}
}\\
&=
\dfrac{t^sq(t)}
{\widetilde{p}(\log t)
\abs{\widetilde{p}(\log t)-t^sq(t)}}\\
&\leq
\dfrac{2t^sq(t)}{c_1^2}.
\end{align*}
Since $\displaystyle \lim_{t\to\infty}t^sq(t)=0$,
it follows that
\[
\Mag(tE_u)
=
\dfrac{t^s}{\widetilde{p}(\log t)}
+
o(t^s).
\]

\smallskip
Finally,
we show that $\displaystyle \dfrac{1}{T}\int_0^T \widetilde{p}(x)\,dx=\dfrac{(m-1)\Gamma(s+1)}{m\log m}$.
Since
\begin{align*}
\widetilde{p}(x)
&=e^{sx}\(1-\dfrac{1}{m}\)\sum_{k=-\infty}^\infty m^{-k}\exp(-e^xr^k)\\
&=e^{sx}\(1-\dfrac{1}{m}\)\sum_{k=-\infty}^\infty e^{-k\log{m}}\exp(-e^x\cdot e^{k\log{r}})\\
&=\(1-\dfrac{1}{m}\)\sum_{k=-\infty}^\infty e^{s(x-k\,\frac{\,\log{m}\,}{s})}\exp(-e^{x-k(-\log{r})})\\
&=\(1-\dfrac{1}{m}\)\sum_{k=-\infty}^\infty e^{s(x-kT)}\exp(-e^{x-kT}),
\end{align*}
we have
\begin{align*}
\dfrac{1}{T}\int_0^T \widetilde{p}(x)\,dx
&=\dfrac{1}{T}\(1-\dfrac{1}{m}\)\sum_{k=-\infty}^\infty \int_0^T e^{s(x-kT)}\exp(-e^{x-kT})\,dx\\
&=\dfrac{1}{T}\(1-\dfrac{1}{m}\)\sum_{k=-\infty}^\infty \int_{-kT}^{-(k-1)T} e^{su}\exp(-e^u)\,du\\
&=\dfrac{1}{T}\(1-\dfrac{1}{m}\)\int_{-\infty}^{\infty} e^{su}\exp(-e^u)\,du\\
&=\dfrac{1}{T}\(1-\dfrac{1}{m}\)\int_{0}^{\infty} y^se^{-y}\,\dfrac{1}{\,y\,}dy\\
&=\dfrac{1}{T}\(1-\dfrac{1}{m}\)\Gamma(s)
=\dfrac{\Gamma(s+1)}{sT}\(1-\dfrac{1}{m}\)
=\dfrac{(m-1)\Gamma(s+1)}{m\log m}.
\end{align*}
\end{proof}

\section*{Acknowledgement}
The authors would like to thank Takayuki Watanabe for pointing out that the sets considered in this paper are known as homogeneous Moran sets in the literature.
The second author was partially supported by JSPS Grant-in-Aid for Early-Career Scientists Grant Number JP25K17258.

\bibliographystyle{plain}
\bibliography{reference}
\end{document}